\title{Bulging Triangles: Generalization of Reuleaux Triangles}
\author{{\sc Norihiro Someyama}$^*$}
\date{{\small $^*$Shin-yo-ji Buddhist Temple, Tokyo, Japan \\ \vspace{1mm}
{\tt philomatics@outlook.jp}\\ \vspace{1mm}
ORCID iD: https://orcid.org/0000-0001-7579-5352}}
\newtheorem{thm}{Theorem}[section]
\newtheorem{prop}{Proposition}[section]
\newtheorem{df}{Definition}[section]
\theoremstyle{definition}
\newtheorem{rem}{Remark}[section]
\newcommand{\arct}{
\stackrel{
\rotatebox{-90}{{\tiny {\bf (}\hspace{-0.5mm}}}
}
{\triangle}\hspace{-1mm}
}
\begin{document}
\maketitle

\begin{abstract}
We introduce a bulging triangle like the generalization of the Reuleaux triangle.
We may be able to propose various ways to bulge a triangle, but this paper presents the way so that its vertices are the same as them of the original triangle.
We find some properties and theorems of our bulging triangles.
In particular, we investigate, via calculus, whether basic facts such as triangle inequalities and Pythagorean theorem hold for bulging triangles.
\end{abstract}

{\small
{\bf Keywords}: Reuleaux triangle, Covexity, Triangle inequality, Pythagorean theorem.
}
\vspace{2mm}

{\small
{\bf 2010 Mathematics Subject Classification}: 51M04, 51M05, 51N20.
}

\begin{multicols}{2}
\section{Introduction}
There is the Reuleaux triangle (also called the spherical triangle) as one of famous and notable figures in geometry.
It is named after F. Reuleaux (1829-1905), a German mechanical engineer.
He considered a figure in which an equilateral triangle is bulged in a natural way so that it has a constant width.
To be more precise, it is defined as follows:

Let $\triangle{\rm ABC}$ be an equilateral triangle consisting of vertices ${\rm A}$, ${\rm B}$ and ${\rm C}$.
It namely holds that 
\begin{align*}
|\overline{{\rm AB}}|
=|\overline{{\rm BC}}|
=|\overline{{\rm CA}}|
\end{align*}
where $\overline{{\rm AB}}$ denotes the side whose end points are ${\rm A}$ and ${\rm B}$, $|\overline{{\rm AB}}|$ denotes its length, and the other edges are similar.
We have a bulging equilateral triangle like Figure \ref{fig:RT} by drawing arcs with center ${\rm C}$ and radius $\overline{{\rm BC}}$, with center ${\rm A}$ and radius $\overline{{\rm CA}}$ and with center ${\rm B}$ and radius $\overline{{\rm AB}}$.

\begin{figure*}[h]
\begin{center}
\includegraphics[width=4cm]{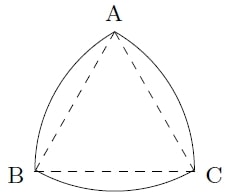}
\end{center}
\caption{Reuleaux Triangle}
\label{fig:RT}
\end{figure*}

The most notable property of the Reuleaux triangle is that it can rotate completely while being within the square and touching all four sides of the square.
This property has been applied to architectural and engineering technologies.

Any Reuleaux triangle has some trivial mathematical properties, e.g., 
\begin{itemize}
\item it is (strictly) convex,
\item the ``triangle inequality'' holds, i.e., the sum of the lengths of any two ``round sides'' is greater than the length of the remaining one ``round side'',
\item the three ``round sides'' are equal and the common length is $\pi a/3\fallingdotseq 1.047a$ if the length of one side of $\triangle{\rm ABC}$ is $a$,
\item its area is $(\pi-\sqrt{3})a^2/2\fallingdotseq 1.628\times (\sqrt{3}a^2/4)$ (i.e., about 1.628 times the original area) if the length of one side of $\triangle{\rm ABC}$ is $a$.
\end{itemize}
Moreover, Reuleaux triangles are widely applied in various fields and there is no end to the list.
See, e.g., \cite{BQB} for the benefits and applications of Reuleaux triangles. 
It is also well known that the Reuleaux triangle is one of solutions for the Kakeya problem (see, e.g., \cite{B} for more information).

Our aim in this paper is to introduce the ``generalized'' Reuleaux triangle with different lengths on the three ``round sides'' and study the properties.

\section{Main Topics}

\subsection{Definition of Bulging Triangles}
Let $\triangle{\rm ABC}$ be a triangle.
For convenience, think of this as an acute triangle.
We would like to make a figure that $\triangle{\rm ABC}$ is bulged by the following way.
Such a figure is called the {\bf bulging triangle (stemmed) from $\triangle{\rm ABC}$} in this paper.

\begin{figure*}[h]
\begin{center}
\includegraphics[width=5cm]{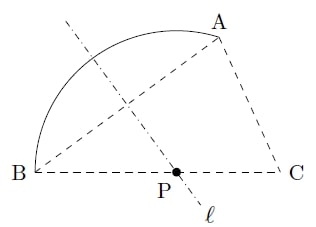}
\end{center}
\caption{${\rm AB}$-center ${\rm P}$}
\label{fig:GRT1}
\end{figure*}

Consider the perpendicular bisector $\ell$ of side $\overline{{\rm AB}}$, and find out the intersection point ${\rm P}$ of another side and $\ell$.
Then, remark that ${\rm P}$ lies on the longer ($\overline{{\rm BC}}$ in the case of Figure \ref{fig:GRT1}) of the remaining two sides $\overline{{\rm BC}}$ and $\overline{{\rm CA}}$.
This fact is almost clear, but we will prove it later (Proposition \ref{prop:ABcenter}).
Determine intersection points ${\rm Q}$ and ${\rm R}$ in the same way for sides $\overline{{\rm BC}}$ and $\overline{{\rm CA}}$.
Here each ${\rm P}$, ${\rm Q}$, ${\rm R}$ is, of course, uniquely determined.
Note that
\begin{align*}
|\overline{{\rm AP}}|&=|\overline{{\rm BP}}|, \\
|\overline{{\rm BQ}}|&=|\overline{{\rm CQ}}|, \\
|\overline{{\rm CR}}|&=|\overline{{\rm AR}}|. 
\end{align*}
We obtain the ``round side'' of $\triangle{{\rm ABC}}$ by drawing the circular arc whose center and radius are ${\rm P}$ and $|\overline{{\rm AP}}|\ (=|\overline{{\rm BP}}|)$, respectively.
The other ``round sides'' are similarly obtained.
From the above, we have been able to define the ``bulging triangle'' (Figure \ref{fig:GRT2}) that is naturally extended the Reuleaux triangle to the general case.
We can treat vertices of the original triangle as vertices of the bulging triangle.

\begin{figure*}[h]
\begin{center}
\includegraphics[width=5cm]{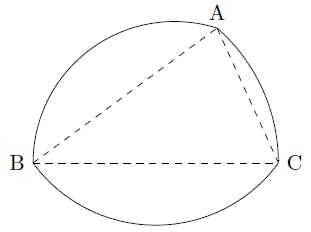}
\end{center}
\caption{Bulging Triangle}
\label{fig:GRT2}
\end{figure*}

In what follows, we set the following promises:
\begin{itemize}
\item We write $\arct{\rm ABC}$ for the bulging triangle whose vertices are ${\rm A}$, ${\rm B}$ and ${\rm C}$.
\item Its ``round sides'' are called {\bf edges} of $\arct{\rm ABC}$ and are denoted by $\widetilde{{\rm AB}}$, $\widetilde{{\rm BC}}$ and $\widetilde{{\rm CA}}$.
Their lengths are denoted by $|\widetilde{{\rm AB}}|$, $|\widetilde{{\rm BC}}|$ and $|\widetilde{{\rm CA}}|$, respectively.
\item Three points ${\rm P}$, ${\rm Q}$ and ${\rm R}$ defined above are called {\bf centers associated with edges of $\arct{\rm ABC}$}.
We often abbreviate such ${\rm P}$ as the {\bf ${\rm AB}$-center} and the others are similar.
\item The radius (i.e., $\overline{{\rm AP}}$, $\overline{{\rm BP}}$, and so on) of the sector consisting of ${\rm A}$, ${\rm B}$ and ${\rm P}$ is called the {\bf ${\rm AB}$-radius}.
The others are similar.
\item The ``length of the radius'' is simply abbreviated as ``the radius'' throughout this paper.
\end{itemize}

\begin{rem}
If $\triangle{\rm ABC}$ is an equilateral triangle, the ${\rm AB}$-center is ${\rm C}$, the ${\rm BC}$-center is ${\rm A}$ and the ${\rm CA}$-center is ${\rm B}$.
Hence, the bulging triangle defined above is certainly an extension of the Reuleaux triangle.
$\blacklozenge$
\end{rem}

\subsection{Basic Properties of Bulging Triangles}

\begin{prop}
\label{prop:ABcenter}
The ${\rm AB}$-center ${\rm P}$ of the bulging triangle $\arct{\rm ABC}$ is on the longer of the remaining two sides $\overline{{\rm BC}},\overline{{\rm CA}}$.
In other words, ${\rm P}$ is the internally dividing point of either $\overline{{\rm BC}}$ or $\overline{{\rm CA}}$.
The ${\rm BC}$-center and ${\rm CA}$-center are similar.
\end{prop}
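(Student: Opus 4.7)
The plan is to use the fact that the perpendicular bisector $\ell$ of $\overline{{\rm AB}}$ is precisely the locus of points equidistant from ${\rm A}$ and ${\rm B}$, so that it separates the plane into an open half-plane $H_{\rm A}$ of points strictly closer to ${\rm A}$ and an open half-plane $H_{\rm B}$ of points strictly closer to ${\rm B}$. I would first record the easy facts that ${\rm A}\in H_{\rm A}$ and ${\rm B}\in H_{\rm B}$, and that ${\rm C}\in H_{\rm A}$ (resp.\ $H_{\rm B}$) if and only if $|\overline{{\rm CA}}|<|\overline{{\rm CB}}|$ (resp.\ $|\overline{{\rm CA}}|>|\overline{{\rm CB}}|$). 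This reduces the proposition to a case analysis on the comparison of $|\overline{{\rm CA}}|$ and $|\overline{{\rm CB}}|$.

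Assuming without loss of generality that $|\overline{{\rm CB}}|>|\overline{{\rm CA}}|$, I would observe that ${\rm C}$ and ${\rm A}$ both lie in $H_{\rm A}$, so the segment $\overline{{\rm CA}}$ is disjoint from $\ell$, whereas ${\rm C}\in H_{\rm A}$ and ${\rm B}\in H_{\rm B}$ lie on opposite sides of $\ell$; a standard sign-change (intermediate-value) argument along $\overline{{\rm CB}}$ then forces it to meet $\ell$ in exactly one point ${\rm P}$, and because the endpoints are in opposite \emph{open} half-planes this ${\rm P}$ is strictly interior to $\overline{{\rm CB}}$. Since $\overline{{\rm CB}}$ is by hypothesis the longer of the two remaining sides, this gives precisely the claim. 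The analogous statements for the ${\rm BC}$- and ${\rm CA}$-centers then follow by cyclic relabelling of the vertices.

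The only caveat worth flagging is the isosceles degeneracy $|\overline{{\rm CA}}|=|\overline{{\rm CB}}|$, in which ${\rm C}$ lies on $\ell$ itself and one has ${\rm P}={\rm C}$; here ${\rm P}$ is a shared endpoint of both remaining sides rather than a strict internal dividing point, but this matches the usual Reuleaux-type symmetric behaviour and is compatible with the statement if read in the weak sense. I do not anticipate any serious technical obstacle: the entire argument is conceptual once the half-plane decomposition is in hand, and the acute-triangle hypothesis is used only implicitly to guarantee that $\ell$ enters the interior of $\triangle{\rm ABC}$ perpendicularly at the midpoint of $\overline{{\rm AB}}$ and therefore must exit through one of the other two sides.
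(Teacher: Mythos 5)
Your proof is correct, but it follows a genuinely different route from the paper's. The paper argues by angle-chasing: assuming $\angle{\rm ABC}<\angle{\rm BAC}$ (equivalently, $|\overline{{\rm BC}}|>|\overline{{\rm CA}}|$ by the side--angle correspondence), it takes ${\rm P}$ on ray ${\rm BC}$ with $|\overline{{\rm AP}}|=|\overline{{\rm BP}}|$, uses the isosceles base angles to get $\angle{\rm BAP}=\angle{\rm ABC}<\angle{\rm BAC}$, and concludes that ray ${\rm AP}$ lies inside $\angle{\rm BAC}$, so ${\rm P}$ falls between ${\rm B}$ and ${\rm C}$. You instead use the metric characterization of the perpendicular bisector $\ell$ as the boundary between the open half-planes $H_{\rm A}$ and $H_{\rm B}$ of points strictly closer to ${\rm A}$ and to ${\rm B}$, place ${\rm C}$ in the half-plane determined by the sign of $|\overline{{\rm CA}}|-|\overline{{\rm CB}}|$, and invoke a separation argument to get existence, uniqueness, and strict interiority of the crossing point in one stroke. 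Your version buys a bit more: it makes the ``internally dividing point'' claim explicit rather than implicit, it cleanly isolates the isosceles degeneracy ${\rm P}={\rm C}$, and it visibly does not need the acuteness of $\triangle{\rm ABC}$ at all (indeed your closing remark that acuteness is used ``implicitly'' is an over-hedge --- the half-plane argument needs nothing from it). The paper's version is shorter and stays within elementary angle comparison, which fits its synthetic style. Both are complete proofs.
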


\begin{proof}
Assume $\angle{\rm ABC}<\angle{\rm BAC}$ and denote by ${\rm P}$ the point such that $|\overline{{\rm AP}}|=|\overline{{\rm BP}}|$.
Then, 
\begin{align*}
\angle{\rm BAP}=\angle{\rm ABC}<\angle{\rm BAC}
\end{align*}
and so ${\rm P}$ is on $\overline{{\rm BC}}$.
Since ${\rm P}$ is the ${\rm AB}$-center of $\arct{\rm ABC}$, this completes the proof.
\end{proof}

The bulging triangle that dents at a vertex is not ideal.
We need to check the following fact.

\begin{prop}
\label{prop:BTconv}
Let $\arct{\rm ABC}$ be any bulging triangle from $\triangle{\rm ABC}$.
\begin{itemize}
\item[1)] If $\triangle{\rm ABC}$ is acute, then $\arct{\rm ABC}$ is convex;
\item[2)] If $\triangle{\rm ABC}$ is right, then $\arct{\rm ABC}$ is convex;
\item[3)] If $\triangle{\rm ABC}$ is obtuse, then $\arct{\rm ABC}$ is concave.
\end{itemize}
\end{prop}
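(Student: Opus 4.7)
The plan is to reduce convexity of $\arct{\rm ABC}$ to a check at the three vertices ${\rm A}, {\rm B}, {\rm C}$ and then compute the interior angle there in terms of the angles $\alpha = \angle{\rm BAC}$, $\beta = \angle{\rm ABC}$, $\gamma = \angle{\rm BCA}$ of $\triangle{\rm ABC}$. Each of the three edges $\widetilde{{\rm AB}}, \widetilde{{\rm BC}}, \widetilde{{\rm CA}}$ is a single circular arc whose center ${\rm P}, {\rm Q}, {\rm R}$ lies on a side of $\triangle{\rm ABC}$, and each arc bulges strictly outward from $\triangle{\rm ABC}$ on the side of its chord opposite the center. Consequently the boundary of $\arct{\rm ABC}$ is smooth and locally supports a convex region along the interior of every edge, so any failure of convexity must occur at one of the three corners.

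At vertex ${\rm A}$, I would compute the interior angle $\theta_{\rm A}$ as follows. The tangent to $\widetilde{{\rm AB}}$ at ${\rm A}$ is perpendicular to the radius ${\rm AP}$ and lies on the side of line ${\rm AB}$ opposite to ${\rm P}$; writing $\phi := \angle{\rm PAB}$, this tangent makes angle $90^{\circ} - \phi$ with ray ${\rm AB}$ on the bulge side. An analogous statement with $\psi := \angle{\rm RAC}$ holds for $\widetilde{{\rm CA}}$ at ${\rm A}$. Since the interior of $\arct{\rm ABC}$ near ${\rm A}$ is the cone $\angle{\rm BAC} = \alpha$ together with the two external bulge slivers abutting ${\rm A}$ on opposite sides, one obtains
\begin{align*}
\theta_{\rm A} = \alpha + (90^{\circ} - \phi) + (90^{\circ} - \psi) = 180^{\circ} + \alpha - \phi - \psi.
\end{align*}

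I would then identify $\phi$ and $\psi$ via Proposition \ref{prop:ABcenter}. If ${\rm P}$ lies on $\overline{{\rm BC}}$, the isosceles triangle ${\rm APB}$ gives $\phi = \angle{\rm PAB} = \angle{\rm PBA} = \beta$; if ${\rm P}$ lies on $\overline{{\rm CA}}$, then ${\rm P}$ is on segment ${\rm AC}$ and hence $\phi = \alpha$. Combined with Proposition \ref{prop:ABcenter} (${\rm P}$ sits on the longer of the two remaining sides), this yields $\phi = \min(\alpha, \beta)$, and the symmetric argument gives $\psi = \min(\alpha, \gamma)$. Hence: if $\alpha$ is not the largest of $\alpha, \beta, \gamma$, then $\theta_{\rm A} < 180^{\circ}$; if $\alpha$ is the largest, then $\phi + \psi = \beta + \gamma = 180^{\circ} - \alpha$ and so $\theta_{\rm A} = 2\alpha$.

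Applying this analysis at each of ${\rm A}, {\rm B}, {\rm C}$, the three cases fall out. For an acute triangle every angle is below $90^{\circ}$, so at each vertex either it is not the largest (whence $\theta < 180^{\circ}$) or it is the largest with $\theta = 2(\text{angle}) < 180^{\circ}$, making $\arct{\rm ABC}$ convex. For a right triangle the same analysis gives $\theta = 180^{\circ}$ at the right-angled vertex (a degenerate straight corner where the two tangents are collinear) and $\theta < 180^{\circ}$ at the others, still convex. For an obtuse triangle, $\theta = 2(\text{obtuse angle}) > 180^{\circ}$ at the obtuse vertex, so $\arct{\rm ABC}$ is concave there. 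The main subtlety I foresee is justifying the ``cone plus two bulge slivers'' picture underlying the formula $\theta_{\rm A} = 180^{\circ} + \alpha - \phi - \psi$, i.e.\ verifying that the two tangents at ${\rm A}$ really do point into the exterior of $\triangle{\rm ABC}$ on opposite sides of ${\rm A}$ so that the interior angle measures as the indicated sum; once that geometric identification is secured, Proposition \ref{prop:ABcenter} and the casework on the largest angle complete the proof.
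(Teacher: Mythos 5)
Your proposal is correct, and it takes a genuinely different route from the paper. The paper's proof considers only the right-angled configuration $\angle{\rm BCA}=\angle R$, observes that the ${\rm BC}$-center and ${\rm CA}$-center then coincide with the midpoint of $\overline{{\rm AB}}$ so that $\widetilde{{\rm BC}}+\widetilde{{\rm CA}}$ is a semicircle through ${\rm C}$, and declares the three cases ``easily obtained'' from this picture; it never writes down what happens at a vertex of an acute or obtuse triangle. You instead reduce convexity to the three corners (the arcs being locally convex outward) and derive the explicit interior-angle formula $\theta_{\rm A}=180^{\circ}+\alpha-\phi-\psi$ with $\phi=\min(\alpha,\beta)$ and $\psi=\min(\alpha,\gamma)$ via the tangent--chord angle and Proposition \ref{prop:ABcenter}; this gives $\theta=2\times(\text{largest angle})$ at the vertex carrying the largest angle and $\theta<180^{\circ}$ at the other two, from which all three cases follow at once, and it recovers the paper's semicircle observation as the degenerate case $\theta=180^{\circ}$. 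Your computation checks out (e.g.\ if $\alpha$ is largest, $\phi+\psi=\beta+\gamma=180^{\circ}-\alpha$ gives $\theta_{\rm A}=2\alpha$; if $\alpha$ is middle or smallest, $\theta_{\rm A}$ equals $180^{\circ}$ minus a positive angle). What your approach buys is a complete, quantitative proof covering all three cases uniformly --- arguably it fills in exactly the step the paper leaves to the reader; what the paper's approach buys is brevity and the reusable semicircle fact, which it invokes again in the proof of Theorem \ref{thm:RBTcirc}. The one point you should still nail down is the standard reduction that a simple closed curve made of finitely many outward-convex arcs bounds a convex region if and only if every corner angle is at most $180^{\circ}$, together with the verification (which you flag yourself) that the two tangent rays at a vertex point into the exterior of $\triangle{\rm ABC}$ on opposite sides of that vertex, so that the interior angle really is the sum $\alpha+(90^{\circ}-\phi)+(90^{\circ}-\psi)$; both are routine but deserve a sentence each.
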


\begin{proof}
Without loss of generality, we can visualize $\arct{\rm ABC}$ from $\triangle{\rm ABC}$ with $\angle{\rm BCA}=\angle R$.
Note that the ${\rm BC}$-center and the ${\rm CA}$-center match and the common point is the middle point of $\overline{{\rm AB}}$.
In other words, $\widetilde{{\rm BC}}+\widetilde{{\rm CA}}$ (i.e., the arc formed by connecting $\widetilde{{\rm BC}}$ and $\widetilde{{\rm CA}}$) coincides with semi-circular arc $\stackrel{\rotatebox{-90}{(}}{{\rm BA}}$ containing point ${\rm C}$.
By virtue of the above, the proof is easily obtained.
\end{proof}

In response to the result of Proposition \ref{prop:BTconv}, {\it we exclusively consider bulging triangles from acute or right-angled triangles in this paper}.

For convenience, we put the following term.

\begin{df}
Let $\triangle{\rm ABC}$ be a right-angled triangle with $\angle{\rm BCA}=\angle R$.
The bulging triangle from this $\triangle{\rm ABC}$ is denoted by $\arct_{R}{\rm BCA}$.
The others are similar.
Such a bulging triangle is called the {\bf right-angled bulging triangle} in this paper.
In particular, we call $\arct_{R}{\rm BCA}$ the {\bf isosceles right-angled bulging triangle} if $|\overline{{\rm BC}}|=|\overline{{\rm CA}}|$.
\end{df}

We study the edges of a bulging triangle from the famous right triangle.
In particular, the result for an isosceles right-angled bulging triangle can be shown to be exactly the same as the conventional one.

\begin{prop}
\label{prop:edgeratio}
For $\arct_{R}{\rm BCA}$ with $|\overline{{\rm BC}}|=a$, it follows that
\begin{itemize}
\item[1)] one has 
\begin{align*}
|\widetilde{{\rm AB}}|:|\widetilde{{\rm BC}}|:|\widetilde{{\rm CA}}|=\sqrt{2}:1:1
\end{align*}
if $\angle{\rm ABC}=\pi/4$,
\item[2)] one has 
\begin{align*}
|\widetilde{{\rm AB}}|:|\widetilde{{\rm BC}}|:|\widetilde{{\rm CA}}|=4:\sqrt{3}:2\sqrt{3}
\end{align*}
if $\angle{\rm ABC}=\pi/3$.
\end{itemize}
\end{prop}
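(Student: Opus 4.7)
The plan is to treat both cases uniformly by first locating the three centers of $\arct_R{\rm BCA}$ together with their radii and central angles, and then specializing to the two prescribed shapes of $\triangle{\rm ABC}$; the three arc lengths are then read off as (radius)$\times$(central angle).

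First I would exploit the right angle at ${\rm C}$. Let ${\rm M}$ denote the midpoint of the hypotenuse $\overline{{\rm AB}}$. The classical fact that the circumcenter of a right triangle coincides with the midpoint of its hypotenuse gives $|\overline{{\rm MA}}|=|\overline{{\rm MB}}|=|\overline{{\rm MC}}|$. Since $\overline{{\rm AB}}$ is the longest side, Proposition \ref{prop:ABcenter} forces both the ${\rm BC}$-center ${\rm Q}$ and the ${\rm CA}$-center ${\rm R}$ onto $\overline{{\rm AB}}$, and the equidistance with ${\rm C}$ identifies ${\rm Q}={\rm R}={\rm M}$. Hence $\widetilde{{\rm BC}}$ and $\widetilde{{\rm CA}}$ are arcs of the circumscribed circle with common radius $r=|\overline{{\rm AB}}|/2$, and by the inscribed angle theorem their central angles are $\angle{\rm BMC}=2\angle{\rm BAC}$ and $\angle{\rm CMA}=2\angle{\rm ABC}$.

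Next I would handle the ${\rm AB}$-arc, which is the slightly more delicate step. By Proposition \ref{prop:ABcenter}, the ${\rm AB}$-center ${\rm P}$ sits on the longer of $\overline{{\rm BC}}$ and $\overline{{\rm CA}}$, or coincides with ${\rm C}$ when these two sides are equal. When ${\rm P}\in\overline{{\rm CA}}$, the isosceles triangle $\triangle{\rm APB}$ satisfies $\angle{\rm PAB}=\angle{\rm CAB}$, so $\angle{\rm APB}=\pi-2\angle{\rm CAB}$; the law of sines in $\triangle{\rm APB}$ then yields $|\overline{{\rm PA}}|=|\overline{{\rm AB}}|/(2\cos\angle{\rm CAB})$, and $|\widetilde{{\rm AB}}|=|\overline{{\rm PA}}|\cdot\angle{\rm APB}$.

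With these formulas in hand, the rest is substitution. In case 1), the triangle is isosceles with $|\overline{{\rm BC}}|=|\overline{{\rm CA}}|=a$, so ${\rm P}={\rm C}$, giving $|\widetilde{{\rm AB}}|=a\cdot\pi/2$; with $r=a\sqrt{2}/2$ and both other central angles equal to $\pi/2$, I also get $|\widetilde{{\rm BC}}|=|\widetilde{{\rm CA}}|=\pi a\sqrt{2}/4$, and the ratio reduces to $\sqrt{2}:1:1$. In case 2), $\angle{\rm BAC}=\pi/6$, $|\overline{{\rm CA}}|=a\sqrt{3}$, $|\overline{{\rm AB}}|=2a$, so $r=a$, the central angles at ${\rm M}$ are $\pi/3$ and $2\pi/3$, and ${\rm P}\in\overline{{\rm CA}}$ with $|\overline{{\rm PA}}|=2a/\sqrt{3}$ and $\angle{\rm APB}=2\pi/3$; the three arc lengths become $4\pi a/(3\sqrt{3}),\ \pi a/3,\ 2\pi a/3$, which simplify to the ratio $4:\sqrt{3}:2\sqrt{3}$ after clearing the common factor $\pi a/(3\sqrt{3})$. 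The only nontrivial point is correctly identifying the apex angle of $\triangle{\rm APB}$; once that is in hand, the rest is routine trigonometry, so I do not anticipate a real obstacle.
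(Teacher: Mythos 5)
Your proposal is correct and follows essentially the same route as the paper: locate the three centers, note that the ${\rm BC}$- and ${\rm CA}$-centers coincide with the midpoint of the hypotenuse, read off each radius and central angle, and compute each arc length as radius times angle. You go somewhat further than the paper by actually proving case 1) (which the paper leaves to the reader) and by justifying the radius $|\overline{{\rm PA}}|=|\overline{{\rm AB}}|/(2\cos\angle{\rm CAB})$ and the central angles via the circumcenter fact, the inscribed angle theorem, and the law of sines, where the paper merely asserts the numerical values.
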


\begin{proof}
We leave the proof of 1) to the reader and prove only 2).

\begin{figure*}[h]
\begin{center}
\includegraphics[width=3.5cm]{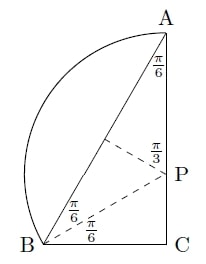}
\end{center}
\caption{Right-Angled Bulging Triangle}
\label{fig:GRT3}
\end{figure*}

Since $|\overline{{\rm BC}}|=a$ and $\angle{\rm ABC}=\pi/3$, we have 
\[
|\overline{{\rm AB}}|=2a,\quad
|\overline{{\rm CA}}|=\sqrt{3}a,\quad
\angle{\rm BAC}=\frac{\pi}{6}.
\]
Denoting by ${\rm P}$ the ${\rm AB}$-center, ${\rm P}$ lies on $\overline{{\rm CA}}$, the ${\rm AB}$-radius is $2a/\sqrt{3}$ and $\angle{\rm APB}=2\pi/3$.
Hence we have 
\begin{align}
\label{eq:wAB4}
|\widetilde{{\rm AB}}|=\frac{4\sqrt{3}}{9}\pi a.
\end{align}
(See Figure \ref{fig:GRT3}.)
Denoting by ${\rm Q}$ the ${\rm BC}$-center, ${\rm Q}$ lies on $\overline{{\rm AB}}$, the ${\rm BC}$-radius is $a$ and $\angle{\rm BPC}=\pi/3$.
Hence we have 
\begin{align}
\label{eq:wBCs3}
|\widetilde{{\rm BC}}|=\frac{1}{3}\pi a.
\end{align}
Denoting by ${\rm R}$ the ${\rm CA}$-center, ${\rm R}={\rm Q}$, the ${\rm CA}$-radius is $a$ and $\angle{\rm CPA}=2\pi/3$.
Hence we have 
\begin{align}
\label{eq:wCA2s3}
|\widetilde{{\rm CA}}|=\frac{2}{3}\pi a.
\end{align}
(\ref{eq:wAB4})-(\ref{eq:wCA2s3}) imply the claim, so the proof is complete.
\end{proof}

We investigate the relationship between the edge of the bulging triangle and the angle of the original triangle.
Take note that we can meet exactly the same results for a usual triangle.

\begin{prop}
Let $\arct{\rm ABC}$ be an isosceles (but not necessarily right-angled) bulging triangle.
That is, suppose 
\[
\angle{\rm ABC}=\angle{\rm BCA},
\quad i.e.,\quad 
|\overline{{\rm AB}}|=|\overline{{\rm CA}}|.
\]
Then, one has 
\[
|\widetilde{{\rm AB}}|=|\widetilde{{\rm CA}}|.
\]
\end{prop}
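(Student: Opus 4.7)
The plan is to exploit the reflective symmetry of the isosceles triangle. Since $|\overline{{\rm AB}}|=|\overline{{\rm CA}}|$, the perpendicular bisector of $\overline{{\rm BC}}$ passes through ${\rm A}$, and the reflection $\sigma$ across this line fixes ${\rm A}$ and interchanges ${\rm B}$ with ${\rm C}$. In particular, $\sigma$ carries $\overline{{\rm AB}}$ to $\overline{{\rm CA}}$ and vice versa, and it fixes $\overline{{\rm BC}}$ setwise. The goal is to show that $\sigma$ also sends the edge $\widetilde{{\rm AB}}$ to the edge $\widetilde{{\rm CA}}$; since $\sigma$ is an isometry, equality of lengths will follow immediately.

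To realize this, I would first observe that $\sigma$ carries the perpendicular bisector $\ell_{\rm AB}$ of $\overline{{\rm AB}}$ to the perpendicular bisector $\ell_{\rm CA}$ of $\overline{{\rm CA}}$. By Proposition~\ref{prop:ABcenter}, the ${\rm AB}$-center ${\rm P}$ is the unique intersection of $\ell_{\rm AB}$ with the longer of $\overline{{\rm BC}}, \overline{{\rm CA}}$, and the ${\rm CA}$-center ${\rm R}$ is the unique intersection of $\ell_{\rm CA}$ with the longer of $\overline{{\rm AB}}, \overline{{\rm BC}}$. Because $\sigma$ swaps $\overline{{\rm CA}}$ with $\overline{{\rm AB}}$ and fixes $\overline{{\rm BC}}$, and because the hypothesis $\angle{\rm ABC}=\angle{\rm BCA}$ ensures that the ``longer side'' conditions transform consistently under $\sigma$, the side on which ${\rm P}$ lies is mapped to the side on which ${\rm R}$ lies. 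Hence $\sigma({\rm P})={\rm R}$, and in particular $|\overline{{\rm AP}}|=|\overline{{\rm CR}}|$, so the ${\rm AB}$-radius equals the ${\rm CA}$-radius.

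Once $\sigma({\rm P})={\rm R}$ is in hand, $\sigma$ maps the circle centered at ${\rm P}$ of radius $|\overline{{\rm AP}}|$ to the circle centered at ${\rm R}$ of radius $|\overline{{\rm CR}}|$, and its restriction sends the arc $\widetilde{{\rm AB}}$ (the bulging edge external to $\triangle{\rm ABC}$ joining ${\rm A}$ and ${\rm B}$) to the arc $\widetilde{{\rm CA}}$ (joining ${\rm C}$ and ${\rm A}$). Since arc length is preserved by reflections, the desired equality $|\widetilde{{\rm AB}}|=|\widetilde{{\rm CA}}|$ follows.

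I do not anticipate any substantive obstacle; the only bookkeeping concern is the splitting into the sub-cases $\angle{\rm BAC}>\angle{\rm ABC}$ and $\angle{\rm BAC}<\angle{\rm ABC}$ (the equilateral case being trivial), which dictates whether ${\rm P}$ sits on $\overline{{\rm BC}}$ or on $\overline{{\rm CA}}$. The symmetry argument dispatches both uniformly. As a fallback, a purely computational route would express the arc length as $|\widetilde{{\rm AB}}|=|\overline{{\rm AP}}|\cdot\angle{\rm APB}$, use the isosceles sub-triangle $\triangle{\rm APB}$ to write the radius and central angle in terms of an angle of $\triangle{\rm ABC}$, and then invoke $|\overline{{\rm AB}}|=|\overline{{\rm CA}}|$ and $\angle{\rm ABC}=\angle{\rm BCA}$ to match the analogous formula for $|\widetilde{{\rm CA}}|$; this is more laborious but requires only the same case analysis.
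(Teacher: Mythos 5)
Your proof is correct and takes essentially the same route as the paper: both arguments use the reflection across the perpendicular bisector of $\overline{{\rm BC}}$ through ${\rm A}$, observe that it swaps ${\rm B}$ with ${\rm C}$ and the ${\rm AB}$-center with the ${\rm CA}$-center, and conclude that the two arcs correspond under an isometry. Your write-up is somewhat more careful than the paper's about \emph{why} the centers are exchanged, but the underlying idea is identical.
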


\begin{figure*}[h]
\begin{center}
\includegraphics[width=3cm]{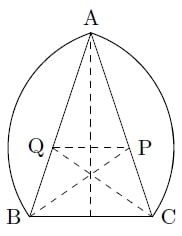}
\end{center}
\caption{Isosceles Right-Angled Bulging Triangle}
\label{fig:GRT4}
\end{figure*}

\begin{proof}
Denote by ${\rm P}$ and ${\rm Q}$ the ${\rm AB}$-center and ${\rm CA}$-center of $\arct{\rm ABC}$, respectively (see Figure \ref{fig:GRT4}).
It is obviously that $\overline{{\rm QP}}\parallel \overline{{\rm BC}}$.
Considering the perpendicular bisector $L$, passing through point ${\rm A}$, of $\overline{{\rm BC}}$, it follows that ${\rm Q}$ is the point of symmetry of ${\rm P}$ with respect to $L$.
${\rm B}$ is also, of course, the point of symmetry of ${\rm C}$ with respect to $L$.
Hence, the claim clearly holds.
This completes the proof.
\end{proof}

\begin{prop}
\label{prop:ang-leng}
Suppose that $\triangle{\rm ABC}$ obeys
\begin{align}
\label{eq:angassump}
0<\angle{\rm ABC}
<\angle{\rm BCA}
\le \frac{\pi}{2}.
\end{align}
Then, $\arct{\rm ABC}$ from such $\triangle{\rm ABC}$ satisfies that 
\begin{align}
\label{eq:wCAlewAB}
|\widetilde{{\rm CA}}|<|\widetilde{{\rm AB}}|.
\end{align}
\end{prop}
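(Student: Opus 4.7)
The approach I would take is to reduce the desired inequality to a monotonicity comparison between two instances of a single one-variable function. The starting point is the observation, extracted from the proof of Proposition \ref{prop:ABcenter}, that for any edge $\widetilde{{\rm XY}}$ of $\arct{\rm ABC}$ the isosceles triangle with apex at the XY-center and base $\overline{{\rm XY}}$ has base angles equal to $\alpha := \min(\angle X, \angle Y)$, where $\angle X$ and $\angle Y$ denote the angles of $\triangle{\rm ABC}$. Its central angle is therefore $\pi - 2\alpha$ and its radius is $|\overline{{\rm XY}}|/(2\cos\alpha)$, giving the uniform formula
\[
|\widetilde{{\rm XY}}| \,=\, \frac{|\overline{{\rm XY}}|\,(\pi - 2\alpha)}{2\cos\alpha},
\]
which one can double-check against the explicit values in Proposition \ref{prop:edgeratio}.

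Next I would introduce $f(\theta) := (\pi - 2\theta)/\cos\theta$ on $(0, \pi/2)$ and prove that it is strictly decreasing. A single differentiation reduces this to showing that $g(\theta) := (\pi - 2\theta)\sin\theta - 2\cos\theta$ is negative throughout $(0, \pi/2)$, which in turn follows from $g(\pi/2) = 0$ together with $g'(\theta) = (\pi - 2\theta)\cos\theta > 0$.

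With these pieces in place, I would set $\alpha_{AB} := \min(\angle{\rm BAC}, \angle{\rm ABC})$ and $\alpha_{CA} := \min(\angle{\rm BCA}, \angle{\rm BAC})$. The hypothesis $\angle{\rm ABC} < \angle{\rm BCA}$ gives $\alpha_{AB} \le \alpha_{CA}$ by monotonicity of $\min$, and therefore $f(\alpha_{AB}) \ge f(\alpha_{CA})$. Simultaneously, the law of sines in $\triangle{\rm ABC}$, combined with the strict monotonicity of $\sin$ on $(0, \pi/2]$ applied to $\angle{\rm ABC} < \angle{\rm BCA} \le \pi/2$, yields $|\overline{{\rm CA}}| < |\overline{{\rm AB}}|$. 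Multiplying these two comparisons through the displayed formula produces $|\widetilde{{\rm CA}}| < |\widetilde{{\rm AB}}|$; the strict inequality on the chord side is enough to force strictness in the conclusion, even in the boundary case $\alpha_{AB} = \alpha_{CA}$.

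The main obstacle I anticipate is bookkeeping rather than analysis: one must verify that the single formula $\alpha = \min(\angle X, \angle Y)$ subsumes uniformly the three positional cases for the XY-center (on one remaining side, on the other, or at the opposite vertex when $\angle X = \angle Y$), and that it continues to give the correct central angle in the right-angled sub-case where two of the three centers coincide at the midpoint of the hypotenuse. Once that is verified, the rest amounts to a short calculus computation and an invocation of the law of sines.
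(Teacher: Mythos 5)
Your proposal is correct and takes essentially the same route as the paper: both express each edge as $|\overline{{\rm XY}}|\,(\pi-2\theta)/(2\cos\theta)$ with $\theta$ the smaller of the two base angles, and both reduce the claim to $|\overline{{\rm CA}}|<|\overline{{\rm AB}}|$ combined with the fact that $(\pi-2\theta)/\cos\theta$ is decreasing on $(0,\pi/2)$. The only difference is technical rather than conceptual --- you prove the monotonicity by differentiating the quotient directly (via $g(\theta)=(\pi-2\theta)\sin\theta-2\cos\theta$), whereas the paper works with the cross-multiplied auxiliary function $F(\alpha)=(\pi-2\beta)\cos\alpha-(\pi-2\alpha)\cos\beta$ and a concavity argument; your version is, if anything, the cleaner of the two.
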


\begin{proof}
Put
\begin{align}
\label{eq:abc}
a:=|\overline{{\rm BC}}|,\quad
b:=|\overline{{\rm CA}}|,\quad
c:=|\overline{{\rm AB}}|
\end{align}
and
\begin{align}
\label{eq:abg}
\alpha:=\angle{\rm BAC},\quad
\beta:=\angle{\rm ABC},\quad
\gamma:=\angle{\rm BCA}.
\end{align}
We prove tha claim by dividing into two patterns.
\begin{itemize}
\item[i)] The case of $\alpha\le \beta$:

By noting that $\beta<\gamma$, we have
\begin{align*}
|\widetilde{{\rm CA}}|=\frac{b}{2\cos \alpha}(\pi-2\alpha)
\end{align*}
and
\begin{align*}
|\widetilde{{\rm AB}}|=\frac{c}{2\cos \alpha}(\pi-2\alpha).
\end{align*}
From $\beta<\gamma$ again, we obtain $b<c$.
This implies (\ref{eq:wCAlewAB}).
\item[ii)] The case of $\beta\le \alpha$:

By noting that $\beta<\gamma$, we have
\begin{align*}
|\widetilde{{\rm CA}}|=\frac{b}{2\cos \alpha}(\pi-2\alpha)
\end{align*}
and
\begin{align*}
|\widetilde{{\rm AB}}|=\frac{c}{2\cos \beta}(\pi-2\beta).
\end{align*}
From $\beta<\gamma$ again, we obtain $b<c$.
In order to gain (\ref{eq:wCAlewAB}), we should prove
\begin{align}
\label{eq:p2acap2bcb}
\frac{\pi-2\alpha}{\cos \alpha}
<\frac{\pi-2\beta}{\cos \beta}.
\end{align}
Consider the function with respect to $\alpha\in (0,\pi/2)$,
\begin{align*}
F(\alpha):=(\pi-2\beta)\cos \alpha-(\pi-2\alpha)\cos \beta,
\end{align*}
by fixing $\beta$.
Since $0<\beta\le \alpha<\pi/2$ and
\begin{align*}
F'(\alpha)=-(\pi-2\beta)\sin \alpha+2\cos \beta,
\end{align*}
we have 
\begin{align*}
F''(\alpha)=-(\pi-2\beta)\cos\alpha<0.
\end{align*}
We can thus concludes that $F$ is monotone decreasing.
\[
\lim_{\alpha\uparrow \pi/2}F(\alpha)=0
\]
regardless of the value of $\beta$, and so $F(\alpha)>0$ on $(0,\pi/2)$.
Therefore, we have shown (\ref{eq:p2acap2bcb}) and can obtain (\ref{eq:wCAlewAB}).
\end{itemize}
Hence this completes the proof.
\end{proof}

Regarding the converse of Proposition \ref{prop:ang-leng}, it seems that yet another sufficient condition is necessary.
This point is different from the property of a usual triangle.

\begin{prop}
Let $\arct{\rm ABC}$ be a bulging triangle that $\angle{\rm BAC}$ is the minimum internal angle of $\triangle{\rm ABC}$.
Then, $|\widetilde{{\rm AB}}|>|\widetilde{{\rm CA}}|$ implies $\angle{\rm BCA}>\angle{\rm ABC}$.
\end{prop}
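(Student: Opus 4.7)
The plan is to argue by contraposition. Writing $\alpha=\angle{\rm BAC}$, $\beta=\angle{\rm ABC}$, and $\gamma=\angle{\rm BCA}$ as in Proposition~\ref{prop:ang-leng}, I would show that the negation $\gamma\le\beta$ of the desired conclusion forces $|\widetilde{{\rm AB}}|\le|\widetilde{{\rm CA}}|$, which contradicts the hypothesis $|\widetilde{{\rm AB}}|>|\widetilde{{\rm CA}}|$.

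Two sub-cases arise. If $\gamma=\beta$, then $\triangle{\rm ABC}$ is isosceles at $\rm A$, and the preceding proposition on isosceles bulging triangles immediately gives $|\widetilde{{\rm AB}}|=|\widetilde{{\rm CA}}|$. If instead $\gamma<\beta$, I would interchange the labels $\rm B\leftrightarrow C$ and apply Proposition~\ref{prop:ang-leng}: in the relabeled triangle the hypothesis $0<\gamma<\beta\le\pi/2$ is precisely what that proposition requires, with the upper bound $\beta\le\pi/2$ coming from the standing restriction to acute or right-angled triangles. Its conclusion, translated back to the original labels, becomes $|\widetilde{{\rm AB}}|<|\widetilde{{\rm CA}}|$. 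In either sub-case the hypothesis is contradicted, so $\gamma>\beta$ must hold.

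The substance of the argument is short because it leans entirely on the two preceding results. The step I expect to demand the most care is the bookkeeping for the $\rm B\leftrightarrow C$ swap: one must verify that the relabeling interchanges $\widetilde{{\rm AB}}\leftrightarrow\widetilde{{\rm CA}}$ and $\beta\leftrightarrow\gamma$, so that Proposition~\ref{prop:ang-leng} applied to the relabeled triangle really produces the desired strict inequality in the original labels. This is routine but worth writing out explicitly. The hypothesis that $\angle{\rm BAC}$ is the minimum internal angle does not appear to play an essential role in this route beyond being compatible with the acute/right standing assumption after the swap, so in the write-up a brief remark explaining why it has been imposed would be appropriate.
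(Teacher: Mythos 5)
Your argument is correct, but it is not the proof the paper gives. The paper argues directly: writing $\alpha,\beta,\gamma$ for the angles and $a,b,c$ for the opposite sides, the hypothesis that $\alpha$ is the minimum angle forces both the ${\rm AB}$-center and the ${\rm CA}$-center to lie on the side emanating from ${\rm A}$, so both arcs subtend the apex angle $\pi-2\alpha$ and one gets $|\widetilde{{\rm AB}}|=\tfrac{c}{2\cos\alpha}(\pi-2\alpha)$ and $|\widetilde{{\rm CA}}|=\tfrac{b}{2\cos\alpha}(\pi-2\alpha)$; the common factor cancels, $|\widetilde{{\rm AB}}|>|\widetilde{{\rm CA}}|$ gives $c>b$, hence $\gamma>\beta$. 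You instead prove the contrapositive $\gamma\le\beta\Rightarrow|\widetilde{{\rm AB}}|\le|\widetilde{{\rm CA}}|$ by citing the isosceles proposition when $\gamma=\beta$ and Proposition \ref{prop:ang-leng} with the labels ${\rm B}$ and ${\rm C}$ interchanged when $\gamma<\beta$; the relabeling bookkeeping you flag does check out ($\widetilde{{\rm AB}}\leftrightarrow\widetilde{{\rm CA}}$, $\beta\leftrightarrow\gamma$, and $\beta\le\pi/2$ holds by the standing restriction to acute or right triangles). The trade-off: the paper's route is self-contained and uses only the trivial common-factor cancellation, but genuinely needs the minimality of $\angle{\rm BAC}$ to get that cancellation; your route transitively imports the calculus argument (the monotonicity of $F$) from case ii) of Proposition \ref{prop:ang-leng}, but in exchange shows that the minimality hypothesis is dispensable --- the converse of Proposition \ref{prop:ang-leng} holds for every acute or right-angled bulging triangle, which is a sharper conclusion than the paper states (and runs counter to the paper's remark that ``yet another sufficient condition is necessary''). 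That observation is worth recording explicitly in your write-up.
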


\begin{proof}
We adopt the notations, (\ref{eq:abc}) and (\ref{eq:abg}), again.
We should prove that $\beta<\gamma$ if $\alpha<\min\{\beta,\gamma\}$ and $|\widetilde{{\rm CA}}|<|\widetilde{{\rm AB}}|$.
Since $\alpha<\beta$ and $\alpha<\gamma$ by the assumption, we have
\begin{align*}
|\widetilde{{\rm AB}}|&=\frac{c}{2\cos\alpha}(\pi-2\alpha), \\
|\widetilde{{\rm CA}}|&=\frac{b}{2\cos\alpha}(\pi-2\alpha).
\end{align*}
Another assumption, $|\widetilde{{\rm AB}}|>|\widetilde{{\rm CA}}|$, then implies $c>b$, and so we concludes $\gamma>\beta$.
Hence this completes the proof.
\end{proof}

\subsection{Theorems and the Proofs}

Recall that the triangle inequality always holds for any general triangle (see \cite{SB} for the proofs).
Let us first show that a property like the triangle inequality holds for a bulging triangle.

\begin{thm}
\label{thm:BTtriineq}
For any $\arct{\rm ABC}$, one has
\begin{align}
\label{eq:BTtriineq}
|\widetilde{{\rm AB}}|<|\widetilde{{\rm BC}}|+|\widetilde{{\rm CA}}|.
\end{align}
\end{thm}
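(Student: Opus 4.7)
The plan is to split into cases by the ordering of the angles $\alpha$, $\beta$, $\gamma$ and the sides $a$, $b$, $c$ as defined in (\ref{eq:abc}) and (\ref{eq:abg}), reducing to a single calculus-based sub-case analogous to the proof of Proposition \ref{prop:ang-leng}. Since swapping the labels $\mathrm{A}\leftrightarrow\mathrm{B}$ leaves the statement unchanged, I first assume $\alpha\le\beta$; then three sub-cases arise from the position of $\gamma$.

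Extending the analysis in the proof of Proposition \ref{prop:ang-leng}, each edge length takes the uniform form
\[
|\widetilde{\mathrm{XY}}|=|\overline{\mathrm{XY}}|\cdot k\bigl(\min(\angle\mathrm{X},\angle\mathrm{Y})\bigr),\qquad k(\varphi):=\frac{\pi-2\varphi}{2\cos\varphi},
\]
where $\angle\mathrm{X}$, $\angle\mathrm{Y}$ denote the interior angles at the endpoints $\mathrm{X}$, $\mathrm{Y}$. If $\gamma\le\alpha\le\beta$, the inequality reads $c\,k(\alpha)<(a+b)\,k(\gamma)$, and this follows from the classical triangle inequality $c<a+b$ together with the fact (observed in the proof of Proposition \ref{prop:ang-leng}) that $k$ is strictly decreasing, since then $(a+b)k(\gamma)\ge(a+b)k(\alpha)>ck(\alpha)$. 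If $\alpha\le\gamma\le\beta$, then $c\le b$, so $|\widetilde{\mathrm{AB}}|\le|\widetilde{\mathrm{CA}}|<|\widetilde{\mathrm{BC}}|+|\widetilde{\mathrm{CA}}|$ trivially. The only substantive case is $\alpha\le\beta\le\gamma\le\pi/2$.

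In that case the goal becomes $(c-b)\,k(\alpha)<a\,k(\beta)$. Applying the law of sines and the sum-to-product identity $\sin\gamma-\sin\beta=2\sin(\alpha/2)\cos(\beta+\alpha/2)$ cancels a common factor $4R\sin(\alpha/2)$ and yields the equivalent inequality
\[
\frac{\cos(\beta+\alpha/2)}{\cos(\alpha/2)}<\frac{k(\beta)}{k(\alpha)}.
\]
Fix $\beta$ and regard both sides as functions of $\alpha$. A direct differentiation gives
\[
\frac{d}{d\alpha}\!\left[\frac{\cos(\beta+\alpha/2)}{\cos(\alpha/2)}\right]=-\frac{\sin\beta}{2\cos^{2}(\alpha/2)}<0,
\]
so the left side is strictly decreasing in $\alpha$, while $k(\beta)/k(\alpha)$ is strictly increasing because $k$ is decreasing. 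Hence it suffices to verify the inequality at the smallest admissible $\alpha$, namely $\alpha=\pi/2-\beta$ (the right-angled case $\gamma=\pi/2$). Routine half-angle manipulations then reduce the inequality at this boundary to $2\beta<\pi\sin\beta$, i.e., the classical Jordan inequality on $(0,\pi/2)$.

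The main obstacle I expect is the monotonicity step---identifying that both sides have the correct $\alpha$-monotonicity so that a single boundary check suffices, and noticing that the boundary check happens to collapse to Jordan's inequality rather than a messier one. The two concrete computations---the derivative of $\cos(\beta+\alpha/2)/\cos(\alpha/2)$ and the half-angle collapse at $\gamma=\pi/2$---are short but must be executed carefully, much as in the proof of Proposition \ref{prop:ang-leng}.
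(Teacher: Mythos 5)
Your proposal is correct, and in its decisive sub-case it does genuinely more than the paper's own argument. Both proofs start from the same edge formula $|\widetilde{\mathrm{XY}}|=|\overline{\mathrm{XY}}|\,k(\min(\angle\mathrm{X},\angle\mathrm{Y}))$ with $k(\varphi)=(\pi-2\varphi)/(2\cos\varphi)$, and both lean on the strict monotonicity of $k$ (which the paper extracts from the auxiliary function $F$ in Proposition \ref{prop:ang-leng}). The paper's case i) (implicitly $\alpha\le\beta<\gamma$) then chains the ordinary triangle inequality with $k(\alpha)>k(\beta)$ to get $|\widetilde{\mathrm{CA}}|+|\widetilde{\mathrm{AB}}|=(b+c)k(\alpha)>a\,k(\alpha)>a\,k(\beta)=|\widetilde{\mathrm{BC}}|$ --- which is the triangle inequality for the edge $\widetilde{\mathrm{BC}}$, the \emph{shortest} edge under that ordering, rather than for $\widetilde{\mathrm{AB}}$ as written in (\ref{eq:BTtriineq}). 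The hard configuration --- bounding the longest edge, i.e.\ $(c-b)k(\alpha)<a\,k(\beta)$ when $\alpha\le\beta\le\gamma$ --- is exactly your third sub-case, and it cannot be dispatched by $c<a+b$ together with $k(\alpha)>k(\beta)$, since those two facts pull in opposite directions there. Your resolution is sound: the law-of-sines reduction with $\sin\gamma-\sin\beta=2\sin(\alpha/2)\cos(\beta+\alpha/2)$ is right, the derivative of $\cos(\beta+\alpha/2)/\cos(\alpha/2)$ is indeed $-\sin\beta/(2\cos^{2}(\alpha/2))<0$, the opposite monotonicities justify checking only the boundary $\gamma=\pi/2$, and there the left side equals $(1-\sin\beta)/\cos\beta$ while the right side equals $(\pi-2\beta)\sin\beta/(2\beta\cos\beta)$, so the inequality collapses to $2\beta<\pi\sin\beta$, Jordan's inequality on $(0,\pi/2)$; the fact that Jordan's inequality is asymptotically tight as $\beta\uparrow\pi/2$ confirms that no cruder estimate would do. In short, your route covers all orderings of the three angles and proves the inequality for the edge actually named in the statement, at the price of a more involved trigonometric argument; the paper's route is shorter but, as written, only establishes the inequality for the easy edge.
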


\begin{proof}
The policy of this proof is similar to Proposition \ref{prop:ang-leng}.
Put (\ref{eq:abc}) and (\ref{eq:abg}) again.
\begin{itemize}
\item[i)] The case of $\alpha\le \beta$:

By noting that $\beta<\gamma$, we have
\begin{align*}
|\widetilde{{\rm BC}}|=\frac{a}{2\cos \beta}(\pi-2\beta),
\end{align*}
\begin{align*}
|\widetilde{{\rm CA}}|=\frac{b}{2\cos \alpha}(\pi-2\alpha)
\end{align*}
and
\begin{align*}
|\widetilde{{\rm AB}}|=\frac{c}{2\cos \alpha}(\pi-2\alpha).
\end{align*}
The triangle inequality derives $b+c>a$, and so 
\begin{align*}
|\widetilde{{\rm CA}}|+|\widetilde{{\rm AB}}|&=\frac{b+c}{2\cos \alpha}(\pi-2\alpha) \\
&>\frac{a(\pi-2\alpha)}{2\cos \alpha}.
\end{align*}
In order to gain (\ref{eq:BTtriineq}), we should prove
\begin{align}
\label{eq:p2aca>p2bcb}
\frac{\pi-2\alpha}{\cos \alpha}
>\frac{\pi-2\beta}{\cos \beta}.
\end{align}
Consider the function with respect to $\alpha\in (0,\pi/2)$,
\begin{align*}
F(\alpha):=-(\pi-2\beta)\cos \alpha+(\pi-2\alpha)\cos \beta,
\end{align*}
by fixing $\beta$.
Since $0<\alpha\le \beta<\pi/2$ and
\begin{align*}
F'(\alpha)=(\pi-2\beta)\sin \alpha-2\cos \beta,
\end{align*}
we have 
\begin{align*}
F''(\alpha)=(\pi-2\beta)\cos\alpha>0.
\end{align*}
We can thus concludes that $F$ is monotone increasing.
We now put
\begin{align*}
G(\beta):=\lim_{\alpha\downarrow 0}F(\alpha)=\pi\cos\beta+2\beta-\pi.
\end{align*}
Then it is easy to see that $G(\beta)>0$ on $(0,\pi/2)$, so we leave this confirmation to the reader.
It namely follows that $F(\alpha)>0$ on $(0,\pi/2)$.
Therefore, we have shown (\ref{eq:p2aca>p2bcb}) and can obtain (\ref{eq:BTtriineq}).
\item[ii)] The case of $\beta\le \alpha$:

The same discussion is repeated, so we leave it to the reader.
\end{itemize}
Hence this completes the proof.
\end{proof}

Let us next verify that ``Pythagorean Theorem'' holds only for a right-angled bulging triangle whose two edges are equal.

\begin{thm}
\label{thm:tPyth}
For $\arct_{R}{\rm BCA}$, it follows that
\begin{align*}
\begin{cases}
|\widetilde{{\rm AB}}|^2\ge |\widetilde{{\rm BC}}|^2+|\widetilde{{\rm CA}}|^2 & {\rm if}\ \angle{\rm ABC}\in [\pi/4,\theta_0], \vspace{1mm}\\
|\widetilde{{\rm AB}}|^2<|\widetilde{{\rm BC}}|^2+|\widetilde{{\rm CA}}|^2 & {\rm if}\ \angle{\rm ABC}\in (\theta_0,\pi/2),
\end{cases}
\end{align*}
where 
\begin{align*}
\theta_0:=\frac{\pi |\overline{{\rm CA}}|}{2(|\overline{{\rm BC}}|+|\overline{{\rm CA}}|)}.
\end{align*}
The equality holds if $\angle{\rm ABC}=\theta_0$ or $|\overline{{\rm BC}}|=|\overline{{\rm CA}}|$, i.e., $\angle{\rm ABC}=\angle{\rm BAC}=\pi/4$.
\end{thm}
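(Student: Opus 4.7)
My plan is to mimic the approach of Theorem~\ref{thm:BTtriineq}: obtain closed-form expressions for the three edges in terms of $\beta=\angle{\rm ABC}$, assemble the difference $\Delta(\beta):=|\widetilde{\rm AB}|^2-|\widetilde{\rm BC}|^2-|\widetilde{\rm CA}|^2$, factor it, locate its zeros, and analyze its sign on each subinterval of $[\pi/4,\pi/2)$ by calculus. Since $\gamma=\pi/2$, set $\alpha=\pi/2-\beta$ and use the right-triangle relations $a=c\cos\beta$ and $b=c\sin\beta$. The hypothesis $\beta\ge\pi/4$ forces $\alpha\le\beta$ and $b\ge a$, so (as already observed in the proof of Proposition~\ref{prop:BTconv}) the ${\rm BC}$- and ${\rm CA}$-centers both coincide with the midpoint of $\overline{\rm AB}$, while the ${\rm AB}$-center sits on the longer leg $\overline{\rm CA}$. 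Specializing the formulas underlying Propositions~\ref{prop:edgeratio} and~\ref{prop:ang-leng} then yields
\[
|\widetilde{\rm BC}|=c\!\left(\tfrac{\pi}{2}-\beta\right),\qquad
|\widetilde{\rm CA}|=c\beta,\qquad
|\widetilde{\rm AB}|=\frac{c\beta}{\sin\beta}.
\]

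With these closed forms,
\[
\Delta(\beta)=c^2\!\left[\beta^2\cot^2\beta-(\pi/2-\beta)^2\right]
=c^2\bigl(\beta\cot\beta-(\pi/2-\beta)\bigr)\bigl(\beta\cot\beta+(\pi/2-\beta)\bigr).
\]
The second factor is strictly positive on $(0,\pi/2)$, so $\Delta$ carries the same sign as $\Phi(\beta):=\beta(\cos\beta+\sin\beta)-(\pi/2)\sin\beta$ (obtained by multiplying the first factor by $\sin\beta>0$). The equation $\Phi(\beta)=0$ rearranges to $\beta=\pi\sin\beta/[2(\cos\beta+\sin\beta)]$, which, after substituting $\sin\beta=b/c$ and $\cos\beta=a/c$, is exactly $\beta=\pi b/[2(a+b)]=\theta_0$. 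Moreover, the isoceles point $a=b$ collapses to $\beta=\pi/4=\theta_0$, recovering the joint equality case stated in the theorem.

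The main obstacle is the sign analysis of $\Phi$ on each of $[\pi/4,\theta_0]$ and $(\theta_0,\pi/2)$; since $\Phi(\beta)=0$ is transcendental, the sign cannot be read off algebraically. I plan to handle this by calculus in the style of Theorem~\ref{thm:BTtriineq}, differentiating
\[
\Phi'(\beta)=(\cos\beta+\sin\beta)+\beta(\cos\beta-\sin\beta)-\tfrac{\pi}{2}\cos\beta,
\]
and, if needed, $\Phi''$, to establish monotonicity or at least sign-constancy on each subinterval. Combined with the two boundary values $\Phi(\pi/4)=0$ and $\Phi(\theta_0)=0$, this pins down the sign of $\Delta$ on each piece and thereby yields the two inequalities stated in the theorem.
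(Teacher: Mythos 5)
Your edge formulas and the difference-of-squares factorization are correct, and they are equivalent to the paper's computation: with $t=\mathrm{Arctan}(b/a)=\beta$ the paper forms $F(t)=\frac{(|\widetilde{{\rm BC}}|^2+|\widetilde{{\rm CA}}|^2)-|\widetilde{{\rm AB}}|^2}{a^2+b^2}=\frac{b^2-a^2}{b^2}t^2-\pi t+\frac{\pi^2}{4}$, which is exactly $-\Delta(\beta)/c^2$ in your notation. But your proposal stops at the decisive step, and the plan you sketch for that step does not work. Because you substituted $a=c\cos\beta$, $b=c\sin\beta$, the quantity $\theta_0=\pi b/(2(a+b))=\pi\sin\beta/(2(\cos\beta+\sin\beta))$ is itself a function of $\beta$; your identity ``$\Phi(\beta)=0$ iff $\beta=\theta_0$'' is a fixed-point equation, not the statement that $\Phi$ vanishes at the fixed number $\theta_0$ attached to a given triangle. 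Hence ``$\Phi(\theta_0)=0$'' is false in general, ``$[\pi/4,\theta_0]$'' is not a fixed interval in your parametrization, and a monotonicity analysis of $\Phi$ on it is not well-posed. (The paper sidesteps this by holding $a,b$ fixed and letting $t$ range freely over $[\pi/4,\pi/2)$, so that $F$ is an honest quadratic whose relevant root is $\theta_0$ and whose sign is read off from $F(\pi/4)\ge 0$, $\lim_{t\uparrow\pi/2}F(t)<0$, and the location of the vertex beyond $\pi/2$.)

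The irony is that you have already finished, with no calculus at all: since $\beta-\theta_0=\Phi(\beta)/(\cos\beta+\sin\beta)$ and $\cos\beta+\sin\beta>0$, the sign of $\Delta$ equals the sign of $\Phi$, which equals the sign of $\beta-\theta_0$; that one line gives the entire dichotomy. Be warned, however, about the direction: this yields $\Delta\le 0$, i.e.\ $|\widetilde{{\rm AB}}|^2\le|\widetilde{{\rm BC}}|^2+|\widetilde{{\rm CA}}|^2$, exactly when $\beta\le\theta_0$, and $\Delta>0$ when $\beta>\theta_0$ --- the reverse of the inequalities displayed in Theorem \ref{thm:tPyth}. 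This is not your error: the paper's own proof concludes $F\ge 0$ on $[\pi/4,\theta_0]$, which says the same thing, and the $30$--$60$--$90$ case checks it numerically ($\beta=\pi/3>\theta_0=\pi(3-\sqrt{3})/4$ while $|\widetilde{{\rm AB}}|^2=\tfrac{16}{27}\pi^2a^2>\tfrac{5}{9}\pi^2a^2=|\widetilde{{\rm BC}}|^2+|\widetilde{{\rm CA}}|^2$). So completing your argument will prove the statement with the two inequality signs interchanged, not the statement as printed; do not force your sign analysis to match the printed version, because any such analysis would then have to contain an error.
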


\begin{proof}
Put $a:=|\overline{{\rm BC}}|$, $b:=|\overline{{\rm CA}}|$, $c:=|\overline{{\rm AB}}|$.
Without loss of generality, we can set that $a\le b$.

\begin{figure*}[h]
\begin{center}
\includegraphics[width=4.5cm]{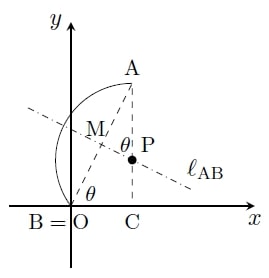}
\end{center}
\caption{Coordinate Representation of Isosceles Right-Angled Bulging Triangle}
\label{fig:GRT5}
\end{figure*}

We prove the claim by algebraic geometry.
Set ${\rm A}(a,b)$, ${\rm B}(0,0)$ and ${\rm C}(a,0)$.
Consider the ${\rm AB}$-perpendicular bisector $\ell_{{\rm AB}}$.
This is given by
\begin{align*}
y=-\frac{a}{b}\left(x-\frac{a}{2}\right)+\frac{b}{2}.
\end{align*}
By finding the intersection point of $\ell_{{\rm AB}}$ and line $x=a$, we have the ${\rm AB}$-center ${\rm P}(a,-a^2/2b+b/2)$.
Thus, the ${\rm AB}$-radius $r_{{\rm AB}}$ is given by
\begin{align*}
r_{{\rm AB}}
=|\overline{{\rm AP}}|
=b-\left(-\frac{a^2}{2b}+\frac{b}{2}\right)
=\frac{a^2+b^2}{2b}.
\end{align*}
Denote by ${\rm M}$ the midpoint of segment $\overline{{\rm AB}}$ and put $\theta:=\angle{\rm APM}=\angle{\rm BPM}$ (see Figure \ref{fig:GRT5}).
It follows $\tan \theta=b/a$, since obviously $\angle{\rm ABC}=\theta$.
Hence,
\begin{align}
\label{eq:tilAB}
|\widetilde{{\rm AB}}|=\frac{a^2+b^2}{b}{\rm Arctan}\,\frac{b}{a}.
\end{align}
Similarly, we obtain
\begin{align}
\label{eq:tilBC}
|\widetilde{{\rm BC}}|=\sqrt{a^2+b^2}\left(\frac{\pi}{2}-{\rm Arctan}\,\frac{b}{a}\right)
\end{align}
and
\begin{align}
\label{eq:tilCA}
|\widetilde{{\rm CA}}|=\sqrt{a^2+b^2}{\rm Arctan}\,\frac{b}{a}.
\end{align}
By virtue of (\ref{eq:tilAB})-(\ref{eq:tilCA}), we have
\begin{align}
\label{eq:tAB2}
|\widetilde{{\rm AB}}|^2
=\left(\frac{a^2+b^2}{b}\right)^2{\rm Arctan}^2\,\frac{b}{a}
\end{align}
and
\begin{align}
\label{eq:tBCtCA}
\begin{aligned}
&|\widetilde{{\rm BC}}|^2+|\widetilde{{\rm CA}}|^2 \\
=&(a^2+b^2)\left(\frac{\pi^2}{4}-\pi {\rm Arctan}\,\frac{b}{a}+2{\rm Arctan}^2\,\frac{b}{a}\right).
\end{aligned}
\end{align}
We here put $t:={\rm Arctan}(b/a)$ and 
\begin{align*}
F(t)&:=\frac{(|\widetilde{{\rm BC}}|^2+|\widetilde{{\rm CA}}|^2)-|\widetilde{{\rm AB}}|^2}{a^2+b^2} \\
&\ =\frac{b^2-a^2}{b^2}t^2-\pi t+\frac{\pi^2}{4}.
\end{align*}
Note that its domain is the interval $[\pi/4,\pi/2)$ from the assumption ``$a\le b$''.
The axis of the parabola $F$ which is convex downward is 
\begin{align*}
t=\alpha:=\frac{\pi b^2}{2(b^2-a^2)}
\end{align*}
and 
\begin{align*}
\frac{b^2}{b^2-a^2}-1=\frac{a^2}{b^2-a^2}>0,
\quad {\rm i.e.,}\quad 
\frac{\pi}{2}<\alpha.
\end{align*}
Moreover, we have
\begin{align*}
F(\pi/4)=\frac{\pi^2(b^2-a^2)}{16b^2}\ge 0
\end{align*}
and
\begin{align*}
\lim_{t\uparrow \pi/2}F(t)=-\frac{\pi^2a^2}{4b^2}<0.
\end{align*}
We find, by the intermediate value theorem, that $F$ has a unique zero point
\begin{align*}
\theta_0=\frac{\pi b}{2(a+b)},
\end{align*}
and we concludes that $F\ge 0$ on $[\pi/4,\theta_0]$ and $F<0$ on $(\theta_0,\pi/2)$.
Therefore, the claim has been proved and we have found that ``Pythagorean theorem'' does not generally hold.

We find the condition for the equal sign to hold.
It is obvious that the equality holds if $\theta=\theta_0$ by virtue of the above discussion.
If $a=b$, we have $\theta=\pi/4$ and ${\rm Arctan}(b/a)=1$.
Hence, from (\ref{eq:tAB2}) and (\ref{eq:tBCtCA}), ``Pythagorean theorem'' holds:
\begin{align*}
|\widetilde{{\rm AB}}|^2=|\widetilde{{\rm BC}}|^2+|\widetilde{{\rm CA}}|^2\left(=\frac{\pi^2a^2}{4}\right).
\end{align*}
This completes the proof.
\end{proof}

\begin{rem}
\label{rem:tPtT}
We check that the ``triangle inequality'' holds for right-angled bulging triangle via the proof of Theorem \ref{thm:tPyth}.
Since
\begin{align*}
|\widetilde{{\rm AB}}|
=\frac{a^2+b^2}{b}{\rm Arctan}\,\frac{b}{a}
\end{align*}
and
\begin{align*}
|\widetilde{{\rm BC}}|+|\widetilde{{\rm CA}}|=\frac{\pi}{2}\sqrt{a^2+b^2},
\end{align*}
we need to prove 
\begin{align}
\label{eq:<pi/2}
\frac{\sqrt{a^2+b^2}}{b}{\rm Arctan}\,\frac{b}{a}<\frac{\pi}{2}
\end{align}
so as to see
\begin{align*}
|\widetilde{{\rm AB}}|<|\widetilde{{\rm BC}}|+|\widetilde{{\rm CA}}|.
\end{align*}
We obtain $\pi/4\le {\rm Arctan}(b/a)<\pi/2$ because of the assumption ``$a\le b$''.
The left-hand side of (\ref{eq:<pi/2}) can be rewritten as
\begin{align*}
\frac{\sqrt{a^2+b^2}}{b}{\rm Arctan}\,\frac{b}{a}
=\frac{{\rm Arctan}(b/a)}{\sin({\rm Arctan}(b/a))},
\end{align*}
so it is sufficient to verify that
\begin{align*}
f(x)=\frac{x}{\sin x},\quad x\in [\pi/4,\pi/2),
\end{align*}
is strictly monotone increasing and $f(x)<\pi/2$.
It is easy to see that fact, so we leave it to the reader.
We have (\ref{eq:<pi/2}) as a result.
$\blacklozenge$
\end{rem}

We finally investigate the relationship between the bulging triangle and the circle.
It is well known that any triangle is inscribed in a circle.
We here try a similar argument for right-angled bulging triangles.
The method of the proof of Theorem \ref{thm:tPyth} is useful for this.

\begin{thm}
\label{thm:RBTcirc}
Any $\arct_{R}{\rm BCA}$ is inscribed in a circle whose center is the midpoint ${\rm M}$ of ${\rm A}$ and ${\rm B}$ and whose radius is $|\overline{{\rm AM}}|$.
\end{thm}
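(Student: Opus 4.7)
The plan is to introduce the circle $\mathcal{C}_M$ of center ${\rm M}$ and radius $|\overline{{\rm AM}}|$ and to verify three claims in turn: (i) the three vertices of $\arct_R{\rm BCA}$ lie on $\mathcal{C}_M$; (ii) the edges $\widetilde{{\rm BC}}$ and $\widetilde{{\rm CA}}$ are themselves subarcs of $\mathcal{C}_M$; (iii) the remaining edge $\widetilde{{\rm AB}}$ is contained in the closed disk bounded by $\mathcal{C}_M$.

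For (i), since $\angle{\rm BCA}=\angle R$, Thales' theorem immediately gives $|\overline{{\rm MC}}|=|\overline{{\rm MA}}|=|\overline{{\rm MB}}|$, so ${\rm A},{\rm B},{\rm C}\in\mathcal{C}_M$. For (ii), I would reuse the observation made inside the proof of Proposition~\ref{prop:BTconv}: when $\triangle{\rm ABC}$ is right-angled at ${\rm C}$, both the ${\rm BC}$-center and the ${\rm CA}$-center coincide with ${\rm M}$, and the common radius is $|\overline{{\rm MA}}|$. Consequently $\widetilde{{\rm BC}}$ and $\widetilde{{\rm CA}}$ together constitute the semicircle of $\mathcal{C}_M$ passing through ${\rm C}$, so both edges lie on $\mathcal{C}_M$ itself.

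The only step with real content is (iii). Let $\mathcal{C}_P$ be the circle supporting $\widetilde{{\rm AB}}$, with center ${\rm P}$ and radius $r_{\rm AB}$. By Proposition~\ref{prop:ABcenter}, ${\rm P}$ lies on one of the legs of $\triangle{\rm ABC}$, hence ${\rm P}\ne {\rm M}$, so $\mathcal{C}_M$ and $\mathcal{C}_P$ are two distinct circles sharing the two points ${\rm A}$ and ${\rm B}$; they therefore meet in exactly $\{{\rm A},{\rm B}\}$. It follows that the open arc $\widetilde{{\rm AB}}$ lies entirely on one side of $\mathcal{C}_M$, and it is enough to locate the extremal point ${\rm N}$ of the arc (its intersection with the perpendicular bisector of $\overline{{\rm AB}}$). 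Since ${\rm P}$ lies on the same side of $\overline{{\rm AB}}$ as ${\rm C}$ while $\widetilde{{\rm AB}}$ bulges away from the interior of $\triangle{\rm ABC}$, the points ${\rm P}$ and ${\rm N}$ are on opposite sides of the chord $\overline{{\rm AB}}$. Writing $d:=|\overline{{\rm MP}}|$, the Pythagorean relation $r_{\rm AB}^2=d^2+|\overline{{\rm MA}}|^2$ gives $|\overline{{\rm MN}}|=r_{\rm AB}-d$, and squaring the desired inequality $r_{\rm AB}<d+|\overline{{\rm MA}}|$ reduces it to $0<2d\,|\overline{{\rm MA}}|$, which is trivial.

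Combining (i)--(iii) yields the theorem. The sole obstacle is the sign of the bulge in (iii), i.e., making sure $\widetilde{{\rm AB}}$ lies on the \emph{inside} of $\mathcal{C}_M$ rather than the outside; this is settled by the one-line squaring computation above, once ${\rm P}$ and ${\rm N}$ are correctly placed on opposite sides of $\overline{{\rm AB}}$.
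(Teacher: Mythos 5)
Your argument is correct, and its skeleton is the same as the paper's: both proofs reduce the theorem to the observation (lifted from the proof of Proposition \ref{prop:BTconv}) that $\widetilde{{\rm BC}}$ and $\widetilde{{\rm CA}}$ together form the semicircle of the circumscribed circle through ${\rm C}$, and then to checking that the extremal point of $\widetilde{{\rm AB}}$ on the perpendicular bisector of $\overline{{\rm AB}}$ lands inside that circle, i.e.\ that $r_{{\rm AB}}<|\overline{{\rm MP}}|+|\overline{{\rm MA}}|$. Where you genuinely differ is in how this last inequality is verified. The paper puts ${\rm B}$ at the origin, computes $r_{{\rm AB}}=(a^2+b^2)/2b$, $|\overline{{\rm MP}}|=a\sqrt{a^2+b^2}/2b$ and $|\overline{{\rm MA}}|=\sqrt{a^2+b^2}/2$ explicitly, and reduces the inequality to $a+b>\sqrt{a^2+b^2}$, which the author then highlights as ``the triangle inequality in disguise.'' You stay coordinate-free: the right angle at ${\rm M}$ in $\triangle{\rm APM}$ gives $r_{{\rm AB}}^2=d^2+|\overline{{\rm MA}}|^2$, and squaring collapses the target inequality to $2d\,|\overline{{\rm MA}}|>0$, i.e.\ to ${\rm P}\neq{\rm M}$, which you secure via Proposition \ref{prop:ABcenter}. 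Your route is shorter, avoids coordinates entirely, and makes plain that the inequality is trivial once the configuration is fixed (the two reductions are of course equivalent, since $(a+b)^2-(a^2+b^2)=2ab$); the paper's computation buys the aesthetically pleasing reappearance of the triangle inequality that its closing Remark dwells on. You are also more careful than the paper on a point it leaves tacit: since $\mathcal{C}_M$ and $\mathcal{C}_P$ are distinct circles meeting only at ${\rm A}$ and ${\rm B}$, the open arc $\widetilde{{\rm AB}}$ cannot cross $\mathcal{C}_M$, so testing the single point ${\rm N}$ really does decide which side the whole edge lies on.
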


\begin{figure*}[h]
\begin{center}
\includegraphics[width=4.5cm]{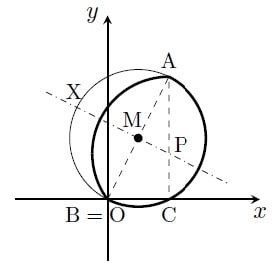}
\end{center}
\caption{Circumscribed Circle of Right-Angled Bulging Triangle}
\label{fig:GRT6}
\end{figure*}

\begin{proof}
We saw that $\widetilde{{\rm BC}}+\widetilde{{\rm CA}}$ is semi-circular arc $\stackrel{\rotatebox{-90}{(}}{{\rm BA}}$ containing ${\rm C}$ in the proof of Proposition \ref{prop:BTconv}.
Moreover, the center and radius of that semi-circular arc are ${\rm M}$ and $|\overline{{\rm AM}}|$, respectively.
Denote by $C$ the circle whose center is ${\rm M}$ and whose radius is $|\overline{{\rm AM}}|$.
Consider the line ${\rm PM}$ (i.e., perpendicular bisector $\ell$ of $\overline{{\rm AB}}$) and the intersection point ${\rm X}$ of $\ell$ and $C$ (see Figure \ref{fig:GRT6}).
Recall
\begin{align}
\label{eq:MPcor}
{\rm M}(a/2,b/2),\quad
{\rm P}(a,(-a^2+b^2)/2b).
\end{align}
Then, it is sufficient to prove that the ${\rm CA}$-radius is shorter than the radius of the circumscribed circle, i.e.,
\begin{align}
\label{eq:APXP}
|\overline{{\rm AP}}|<|\overline{{\rm XP}}|.
\end{align}

It has already been obtained that
\begin{align}
\label{eq:APleng}
|\overline{{\rm AP}}|=\frac{a^2+b^2}{2b}
\end{align}
in the proof of Theorem \ref{thm:tPyth}.
Since
\[
|\overline{{\rm XM}}|
=|\overline{{\rm AM}}|
=\frac{c}{2}
=\frac{\sqrt{a^2+b^2}}{2}
\]
and
\[
|\overline{{\rm MP}}|
=\frac{a\sqrt{a^2+b^2}}{2b}
\]
from (\ref{eq:MPcor}), we have
\begin{align}
\label{eq:XPleng}
|\overline{{\rm XP}}|=|\overline{{\rm XM}}|+|\overline{{\rm MP}}|
=\frac{(a+b)\sqrt{a^2+b^2}}{2b}.
\end{align}
We should thus check
\begin{align}
\label{eq:XP>APaw}
\frac{(a+b)\sqrt{a^2+b^2}}{2b}> \frac{a^2+b^2}{2b}
\end{align}
to prove (\ref{eq:APXP}), according to (\ref{eq:APleng}) and (\ref{eq:XPleng}).
We gain that
\begin{align*}
&\frac{(a+b)\sqrt{a^2+b^2}}{2b}-\frac{a^2+b^2}{2b} \\
&=\frac{\sqrt{a^2+b^2}}{2b}\{(a+b)-\sqrt{a^2+b^2}\},
\end{align*}
but the triangle inequality for $\triangle{{\rm ABC}}$ implies that
\[
(a+b)-\sqrt{a^2+b^2}=(a+b)-c>0.
\]
So (\ref{eq:XP>APaw}), i.e., (\ref{eq:APXP}) has obtained.
This completes the proof.
\end{proof}

\begin{rem}
The best feature of the above proof is to show (\ref{eq:XP>APaw}), i.e.,
\[
a+b>\sqrt{a^2+b^2}.
\]
This was a paraphrase of the triangle inequality.
The triangle inequality seems to have already been considered trivial in ancient Greek times, but it is quite interesting that such a trivial property is the essence of Theorem \ref{thm:RBTcirc}.
$\blacklozenge$
\end{rem}

\section{Conclusion}
We have defined the generalized Reuleaux triangles by one of many ways.
Beside this way, there is for instance a way to construct a generalization of the Reuleaux triangle $\triangle{\rm ABC}$ as the closed and convex curve consisting of three arcs centered on each ${\rm A}$, ${\rm B}$, ${\rm C}$.
That can be seen a little in \cite{R}.
As we can see from this, ``generalization'' has many implications.

We have proved that ``triangle inequalities'' always hold but ``Pythagorean theorems'' do not generally hold for bulging triangles defined in this paper.
Also, right-angled bulging triangles have good qualities (the latter claim of Theorem \ref{thm:tPyth} and Remark \ref{rem:tPtT}).
In that sense, our definition of the generalization of Reuleaux triangles would be moderately appropriate.

We may however be able to define new bulging triangles satisfying the above two properties.
In addition, there are a number of topics left by this paper:
\begin{itemize}
\item We can easily obtain the area of a Reuleaux triangle (see Introduction of this paper), but it seems difficult to find that of a general bulging triangle.
\item It is unclear if any (not necessarily right-angled) bulging triangle is inscribed in a circle.
\item The application of bulging triangles is our concern. etc.
\end{itemize}
We would like to make research on them a future subject.


{\small

}
\end{multicols}

\end{document}